\newtheorem{theorem}{Theorem}
\newtheorem{lemma}[theorem]{Lemma}
\numberwithin{equation}{section}
\numberwithin{theorem}{section}
\newcommand{\cC}{\mathcal{C}}
\newcommand{\cK}{\mathcal{K}}
\newcommand{\e}{\mathbf{e}}
\newcommand{\C}{\mathbb{C}}
\newcommand{\F}{\mathbb{F}}
\newcommand{\Z}{\mathbb{Z}}
\newcommand{\Q}{\mathbb{Q}}
\newcommand{\R}{\mathbb{R}}
\newcommand{\Arg}{\mathrm{Arg}}
\newcommand{\Tr}{\mathrm{Tr}}
\newcommand{\h}{\mathrm{h}}
\def\le{\leqslant}
\def\ge{\geqslant}
\def\({\left(}
\def\){\right)}
\def\rf#1{\left\lceil#1\right\rceil}
\begin{document}

\title[M{\"o}bius randomness law for Frobenius traces]{M{\"o}bius randomness law for Frobenius traces of ordinary curves}

\author{Min Sha} 
\address{School of Mathematics and Statistics, University of New South Wales, 
Sydney, NSW 2052, Australia}
\email{shamin2010@gmail.com}

\author{Igor E. Shparlinski} 
\address{School of Mathematics and Statistics, University of New South Wales, 
Sydney, NSW 2052, Australia}
\email{igor.shparlinski@unsw.edu.au}
\urladdr{\url{http://web.maths.unsw.edu.au/~igorshparlinski/}}

\subjclass[2010]{11B37, 11G20, 11J25, 11J86, 11L07}

\keywords{M{\"o}bius randomness law, smooth projective curve, Frobenius trace, Frobenius angle}

\begin{abstract} 
Recently E.~Bombieri and N.~M.~Katz (2010) have demonstrated 
that several well-known results about the distribution of values of 
linear recurrence sequences lead to interesting statements for 
Frobenius traces of algebraic curves. Here we continue this line 
of study and establish the M{\"o}bius randomness law quantitatively for 
the normalised form of Frobenius traces. 
\end{abstract}

\maketitle

\section{Introduction}

\subsection{Background on Frobenius traces}
Throughout the paper, $\cC$ denotes a smooth projective curve over a finite field $\F_q$ of 
$q$ elements. Following Bombieri and Katz~\cite{BoKa},
we consider the sequence $A_\cC (n)$ of \textit{Frobenius traces} defined by 
$$
\# \cC(\F_{q^n}) = q^n + 1 - A_\cC (n)
$$
where $\# \cC(\F_{q^n})$ is the cardinality of the set   $ \cC(\F_{q^n})$ of $\F_{q^n}$-rational points on $\cC$.

Let $g$ be the genus of $\cC$, and assume $g \ge 1$.
Since by the Weil bound (see~\cite[Section~VIII.5.9]{Lor}), we have 
$$
|A_\cC (n)| \le 2g q^{n/2}, 
$$
it is convenient to normalise the sequence $A_\cC (n)$ as 
\begin{equation}
\label{eq:an}
a_\cC (n) = \frac{A_\cC (n)}{2g q^{n/2}} \in [-1,1], 
\end{equation}
which is called the \textit{normalised Frobenius trace}.

\subsection{Some previous results} 
Here we recall some previous results on the distribution 
of the sequence $a_\cC (n)$ given by~\eqref{eq:an}. First we recall that  Bombieri and Katz~\cite{BoKa}, 
using an interpretation of $A_\cC (n)$ as a linear recurrence sequence of 
order $2g$, have showed that $|a_\cC (n)|$ is not too small.  
More precisely,  by~\cite[Theorem~3.1]{BoKa}, for any $\varepsilon> 0$ there is a constant $c(\varepsilon) > 0$ 
depending only on $\varepsilon$ such that  for every $n$ either $a_\cC (n) = 0$ or $|a_\cC (n)| \ge c(\varepsilon) g^{-1} q^{-n\varepsilon}$.  
However, unless $g=1$ (that is, $\cC$ is an elliptic curve),  the constant $c(\varepsilon)$
is not effectively computable. Using the argument of~\cite[Theorem~2.6]{EvdPSW}, one can 
give a stronger and fully effective bound of the form  
$$
|a_\cC (n)| \ge  n^{- \psi(n)}, 
$$
which for any function $\psi(z) \to \infty$ as $z \to \infty$, holds for almost all $n$ in the sense 
of asymptotic density. 

The asymptotic  distribution of the values $a_\cC (n)$ in the interval $[-1,1]$ has been studied 
in~\cite{AhShp}, where it is shown that this distribution differs from the usually occurring 
Sato--Tate law~\cite{Katz,KaSa}.  On the other hand, an asymptotic formula for the average $\ell$-adic order of 
$A_\cC (n)$ for a prime $\ell \nmid q$ has  been given in~\cite[Theorem~4]{vdPShp}. 

Several related results about the distribution of Kloosterman and Birch sums have recently been given by Perret-Gentil~\cite{P-G}.

We also note that using the upper bound of van der Poorten and 
Schlickewei~\cite[Theorem~1]{vdPSchl},
on the number of zeros of linear recurrence sequences, one can estimate the number of zero
values $a_\cC (n)=0$ in a better way which is outlined in~\cite[Section~5]{BoKa} (via uniform 
bounds on the number of zeros of linear recurrence sequences such as in~\cite{AmVia,ESS}). 

The second part of our motivations comes from the so-called \textit{M{\"o}bius randomness law}  
(see, for example,~\cite[Section~13.1]{IwKow}, and  also Sarnak's conjecture~\cite{Sarnak}) which 
 roughly asserts that for any bounded sequence $s(n)$ of complex numbers, 
 defined in terms which are not directly related to $\mu(n)$, 
we have 
$$
\sum_{n=1}^N\mu(n)  s(n) = o(N), \quad as \,\, N \to \infty.
$$ 
Here, we establish quantitatively the M{\"o}bius randomness law for 
the sequence $a_\cC (n)$ defined in~\eqref{eq:an}.

\subsection{Our results}

We recall that the M{\"o}bius function
 is  defined as $\mu(n)=0$ if an integer $n$ is divisible by a
prime squared and $\mu(n)=(-1)^r$ if $n$ is a product
of $r$ distinct primes.

Our main result is  Theorem~\ref{thm:MobExp1}. 
For the completeness, we also record Theorem~\ref{thm:MobExp0}. 

\begin{theorem}
\label{thm:MobExp0} 
For any  $B > 0$ and for  any integer $N \ge 2$, we have
$$
\left|\sum_{n=1}^N\mu(n)  a_\cC (n)\right| 
\le c(B) N (\log N)^{-B},  
$$
where $c(B) > 0$ is a constant depending only on $B$.   
\end{theorem}

When $\cC$ is an \textit{ordinary curve}, we can get a better result. 
Recall that $\cC$ is called ordinary if and only if the number of $p$-torsion points on  the Jacobian of $\cC$ is exactly $p^g$ , 
where $p$ is the characteristic of $\F_q$ (see~\cite[Definition~3.1]{Howe} for some equivalent definitions). 

Recall that the
assertion $U \ll V$  is equivalent to the inequality $|U| \le cV$ with some {\it absolute\/} constant $c>0$.

\begin{theorem}
\label{thm:MobExp1} 
If $\cC$ is an ordinary curve of genus $g \ge 1$, for any integer $N \ge 2$ we have
$$
\sum_{n=1}^N\mu(n)  a_\cC (n) \ll N^{1-1/\gamma(q,g)} (\log N)^4, 
$$
where 
$$
 \gamma(q,g) = 2^{33}3^3 \pi g^3 (\pi + \log q) \log(16g)  + 4. 
$$ 
\end{theorem}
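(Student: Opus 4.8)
The plan is to peel off the Frobenius angles, reduce to exponential sums over the integers twisted by $\mu$, and estimate each of these by Vaughan's identity, the Diophantine input on the angles coming from an effective lower bound for linear forms in logarithms. First, by the Weil ``Riemann hypothesis'' for $\cC$ I would write the Frobenius eigenvalues as $\alpha_1,\overline{\alpha}_1,\dots,\alpha_g,\overline{\alpha}_g$ with $\alpha_j\overline{\alpha}_j=q$ and $\alpha_j=q^{1/2}e^{i\theta_j}$, $\theta_j\in[0,\pi]$, so that $A_\cC(n)=q^{n/2}\sum_{j=1}^g 2\cos(n\theta_j)$ and hence
$$
\sum_{n=1}^N\mu(n)a_\cC(n)=\frac1g\sum_{j=1}^g\mathrm{Re}\sum_{n=1}^N\mu(n)e^{in\theta_j};
$$
it then suffices to bound $T_j(N):=\sum_{n\le N}\mu(n)e^{in\theta_j}$ for each $j$, the factor $1/g$ being harmless. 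Set $\gamma_j:=\alpha_j/\overline{\alpha}_j=\alpha_j^2/q$, an algebraic number on the unit circle with $e^{2in\theta_j}=\gamma_j^n$, of degree $d_j=[\Q(\gamma_j):\Q]\le 2g$ and, since complex conjugation shows $\overline{\alpha}_j$ is a Galois conjugate of $\alpha_j$ and all conjugates of $\alpha_j$ (being roots of the zeta polynomial) have absolute value $q^{1/2}$, of Weil height $\h(\gamma_j)\le\log q$. The key use of the hypothesis that $\cC$ is ordinary is exactly to ensure that no $\gamma_j$ is a root of unity: ordinariness forces the Newton polygon of the zeta function to have only the slopes $0$ and $1$, so under a suitable normalisation of the $p$-adic valuation the $\alpha_j$ have valuation $0$ and the $\overline{\alpha}_j$ valuation $1$, whence $\gamma_j=\alpha_j/\overline{\alpha}_j$ has nonzero valuation and cannot be a root of unity; equivalently $\log\gamma_j$ and $2\pi i$ are linearly independent over $\Q$.

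The heart of the argument is the Diophantine bound: for every integer $r\ge 2$,
$$
\left\|\frac{r\theta_j}{2\pi}\right\|\gg r^{-\kappa},\qquad \kappa=\kappa(q,g)\ll g^3(\pi+\log q)\log(16g),
$$
where $\|\cdot\|$ denotes distance to the nearest integer. I would prove this by observing that, choosing the multiple $2\pi m$ of $2\pi$ nearest to $r\theta_j$, the left-hand side is comparable to $|\gamma_j^r-1|$, which in turn is comparable to the linear form $\Lambda=r\log\gamma_j-2\pi i m$ on the principal branch. An explicit lower bound for linear forms in two logarithms of algebraic numbers --- the estimates of Matveev, or of Laurent--Mignotte--Nesterenko --- is then applied with degree $d_j\le 2g$, $\h(\gamma_j)\le\log q$ and $|\log\gamma_j|\le 2\pi$, linear independence being guaranteed by the previous step; tracking all constants yields $\log|\Lambda|\ge -\kappa\log(2r)$ with an explicit $\kappa$, and one checks $\kappa\le\tfrac14\gamma(q,g)$.

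The rest is classical. The lower bound on $\|r\theta_j/2\pi\|$ forces the continued-fraction denominators $q_k$ of $\alpha:=\theta_j/2\pi$ to satisfy $q_{k+1}\ll q_k^{\kappa}$, so for the target $T=N^{3/5}$ there is a convergent denominator $r$ with $T^{1/\kappa}\ll r\le T$ and $|\alpha-a/r|\le r^{-2}$ for some $a$ coprime to $r$. Feeding this into the standard bound obtained from Vaughan's identity for sums of $\mu$ against additive characters (see, e.g., \cite[Chapter~13]{IwKow}),
$$
T_j(N)=\sum_{n\le N}\mu(n)e^{2\pi i n\alpha}\ll\Bigl(\frac{N}{r^{1/2}}+N^{4/5}+(Nr)^{1/2}\Bigr)(\log N)^4,
$$
the bound $r\le N^{3/5}$ makes $(Nr)^{1/2}\le N^{4/5}$, while $r\gg N^{3/(5\kappa)}$ makes $N/r^{1/2}\ll N^{1-3/(10\kappa)}\le N^{1-1/(4\kappa)}$; since $\gamma(q,g)\ge 4\kappa$ and $\gamma(q,g)\ge 5$, both $N^{4/5}$ and $N^{1-1/(4\kappa)}$ are at most $N^{1-1/\gamma(q,g)}$, and summing trivially over $j=1,\dots,g$ gives the theorem, the additive constant in $\gamma(q,g)$ providing the slack for this bookkeeping.

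The main obstacle is the Diophantine estimate: applying an effective lower bound for linear forms in logarithms with \emph{all} constants made explicit, fed with the exact degree and height of $\gamma_j$, and routed throughout via $\gamma_j=e^{2i\theta_j}$ rather than $e^{i\theta_j}$ (which need not even be algebraic when $q$ is not a square). A secondary but genuine difficulty is the passage from that one-parameter lower bound to a rational approximation of $\theta_j/2\pi$ whose denominator lies in the narrow window on which Vaughan's method beats the $N^{4/5}$ barrier --- this is precisely what pins down the shape of $\gamma(q,g)$.
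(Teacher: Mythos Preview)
Your approach is essentially the same as the paper's: write $a_\cC(n)$ as an average of $\e(n\alpha_j)$, use ordinariness to ensure the angles are irrational, feed a Baker-type lower bound for linear forms in two logarithms into Dirichlet/continued fractions to locate a good denominator, and then apply the Vaughan/Vinogradov bound from \cite[Chapter~13]{IwKow} for $\sum_{n\le N}\mu(n)\e(n\alpha)$. The paper organises this as Lemmas~\ref{lem:arg-appro}, \ref{lem:appro-s}, \ref{lem:mu-alpha} and \ref{lem:Frob-ang} and uses the specific form of the exponential-sum bound in Lemma~\ref{lem:MobExp2}, but the skeleton is identical.

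Two small points of divergence are worth noting. First, you work with $\gamma_j=\alpha_j/\overline{\alpha}_j=\alpha_j^2/q\in\Q(\alpha_j)$, which has degree at most $2g$, whereas the paper uses $\e(\alpha_j)=\beta_j q^{-1/2}$ and must allow degree up to $4g$; your choice is cleaner and would in fact sharpen the constant in $\kappa$, so the inequality $\kappa\le\tfrac14\gamma(q,g)$ you need holds with room to spare. Second, be careful with the linear-forms reference: the paper explicitly notes that \cite[Th{\'e}or{\`e}me~3]{LMN} carries a $(\log B)^2$ rather than a $\log B$, which would give $\log|\Lambda|\ge -\kappa(\log r)^2$ and spoil the clean power saving; Matveev~\cite{Matveev} or Baker--W{\"u}stholz~\cite{BW} (the paper's choice) do give the linear dependence $\log|\Lambda|\ge -\kappa\log(2r)$ that your argument requires, so drop the ``or LMN''.
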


We remark that the constant $c(B)$ in  Theorem~\ref{thm:MobExp0} is currently not effectively computable.  
For the implied constant in Theorem~\ref{thm:MobExp1}, it is effectively computable 
(due to the effectiveness of Lemma~\ref{lem:MobExp2}), but getting an explicit value of this constant is beyond the scope of this paper. 

To prove the above results, we first interpret the normalised Frobenius traces
as linear recurrence sequences via the zeta function (see the equation~\eqref{eq:an-alpha}). 
Then, Theorem~\ref{thm:MobExp0} is an immediate application of 
a result of Davenport~\cite{Davenport}, given by~\eqref{eq:Davenport} below,
on bounds on exponential sums with $\mu(n)\exp(2\pi i \alpha)$ for $\alpha \in  \R$, 
and Theorem~\ref{thm:MobExp1} is a consequence of an improvement of~\eqref{eq:Davenport}   
when some Diophantine properties of $\alpha$ are known 
(see Lemma~\ref{lem:mu-alpha}). 
More precisely,  this improvement relies on a lower bound of the denominator in 
Dirichlet's approximation of an irrational number $\alpha$ 
when $\exp(2\pi i \alpha)$ is an algebraic number (see Lemma~\ref{lem:appro-s}).
Here, we are able to take advantage of this stronger bound  in the case of Theorem~\ref{thm:MobExp1} (see Lemma~\ref{lem:Frob-ang}).

\section{Preliminaries}

\subsection{Linear form in the logarithms of algebraic numbers}

The main tool in this paper is Baker's theory of linear forms in the logarithms of algebraic 
numbers, see~\cite{Bug}. 
Here we restate one of its explicit forms due to Baker and W{\"u}stholz~\cite{BW}. 

First, recall that for a non-zero complex number $z$, the principal value of the natural logarithm of $z\in \C$ is 
$$
\log z = \log |z| + i \cdot \Arg(z),
$$
where as usual, $i$ is the imaginary unit, and
 $\Arg(z)$ is the principal value of the arguments of $z$ ($ 0 \le \Arg(z) < 2\pi$). 

Let
$$
\Lambda = b_1 \log \alpha_1 + b_2 \log \alpha_2 + \cdots + b_n \log \alpha_n,
$$
where $n \ge 2$, $b_1,\ldots,b_n \in \Z$, and $\alpha_1,\ldots, \alpha_n$ are non-zero elements of a number field $K$. 
Let $d =[K:\Q]$ and $B=\max \{ |b_1|,\ldots, |b_n| \}$. For all $1 \le j \le n$,  choose a real number $A_j$ such that
$$
A_j \ge \max \{ \h(\alpha_j), |\log \alpha_j |/d, 1/d \}, 
$$
where $\h$ stands for the  logarithmic absolute  Weil height 
(note that the height function used in~\cite{BW} is different from ours). 

Suppose that $\Lambda \ne 0$. Then, we have
\begin{equation} \label{eq:BW}
\log |\Lambda| > -C(n,d)A_1\cdots A_n \cdot \max\{\log B, 1/d\}, 
\end{equation}
where 
$$
C(n,d) = 18(n+1)!n^{n+1}(32d)^{n+2}\log(2nd). 
%\min \left\{ \frac{1}{\delta} \(\frac{1}{2}en \)^\delta 30^{n+3}n^{3.5}, 2^{6n+20}  \right\}.
$$

We remark that we in fact only need a lower bound on linear forms in two logarithms of the form in~\cite[Th{\'e}or{\`e}me~3]{LMN}. 
However, the lower bound in~\cite[Th{\'e}or{\`e}me~3]{LMN} essentially has the term $(\log B)^2$. 
This is not sufficient for our purpose. 
Alternatively, using \cite[Theorem 2.1]{Gou}  one may obtain such a lower bound having the term $\log B$. 

In addition, there is another explicit form of Baker's theory due to Matveev~\cite[Corollary~2.3]{Matveev}. 
However, for our purpose the one of Baker and W{\"u}stholz~\cite{BW} gives a slightly better result.

\subsection{Lower bounds for Diophantine approximations}

The famous theorem of Roth~\cite{Roth} states that 
given an irrational algebraic number $\alpha$ and $\varepsilon>0$, 
there exists a constant $c(\alpha, \varepsilon) > 0$ such that 
for any integers $r,s$ with $s>0$ we have 
\begin{equation}  \label{eq:Roth}
\left|  \alpha - \frac{r}{s} \right| > \frac{c(\alpha, \varepsilon)}{s^{2 + \varepsilon}}. 
\end{equation}
Certainly, in general for real transcendental numbers no such bound is possible. 
Here,   using Baker's theory of  linear forms in logarithms (see~\cite{Bug}), 
we obtain such a lower bound for a special kind of real transcendental numbers, namely, 
for irrational arguments of algebraic numbers.  

Now, let $\alpha$ be an irrational number. Define 
$$
\e(\alpha) = e^{2\pi i \alpha}, 
$$
where as usual $e$ is the base of the natural logarithm. 
If $\e(\alpha)$ is an algebraic number, 
then by the Gelfond--Schneider theorem we know that $\alpha$ must be a transcendental number. 
Indeed, assume that $\alpha$ is an algebraic number, then by the Gelfond--Schneider theorem 
$1 = \e(\alpha)^{1/\alpha}$ is a transcendental number, which is impossible. 

The following result gives such a lower bound for any irrational number $\alpha$ when $\e(\alpha)$ is an algebraic number. 
This can be viewed as a Diophantine property of the arguments of algebraic numbers. 
In fact, $1+ \kappa(\alpha)$ is an upper bound of the {\it irrationality exponent\/} of $\alpha$. 

We also remark that the following result is essentially a variant of~\cite[Theorem~4.1]{BoKa}. 
Following the same strategy, we provide a proof for the convenience of the reader and also put it 
in a form suitable for our applications. 

\begin{lemma}  \label{lem:arg-appro}
Let $\alpha$ be an irrational number. 
Assume that $\e(\alpha)$ is an algebraic number. 
Then,  for any integers $r,s$ with $s \ge 1$, we have 
$$  
\left|  \alpha - \frac{r}{s} \right| > \frac{1}{\pi(2s)^{1+\kappa(\alpha)}}, 
$$
where 
\begin{align*}
& \kappa(\alpha) = 2^{25} 3^3 \pi d^3 A_1 \log(4d)  , \\
& d = [ \Q(\e(\alpha)) : \Q], \\
& A_1 = \max\{\h(\e(\alpha)), 2\pi \alpha/d, 1/d\}.\\
\end{align*}
\end{lemma}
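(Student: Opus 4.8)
The plan is to reduce the Diophantine inequality for $\alpha$ to a lower bound for a linear form in two logarithms of algebraic numbers, which then gets fed into the Baker--W\"ustholz estimate \eqref{eq:BW}. The starting observation is that $\e(\alpha) = e^{2\pi i \alpha}$ being algebraic means $\zeta := \e(\alpha)$ has absolute value $1$ (since $\alpha$ is real), and its principal logarithm is $\log \zeta = 2\pi i (\alpha - \fl{\alpha})$ up to the normalisation of $\Arg$; more simply, $2\pi i \alpha = \log \zeta + 2\pi i m$ for an appropriate integer $m$. Given $r, s$ with $s \ge 1$, I would write
\begin{equation*}
2\pi i \left(\alpha - \frac{r}{s}\right) = \frac{1}{s}\left(2\pi i s\, \alpha - 2\pi i r\right) = \frac{1}{s}\left( s \log \zeta + 2\pi i (sm - r)\right),
\end{equation*}
so that, recalling $2\pi i = \log(1) $ fails but $\log(-1) = i\pi$ does the job, I set $\Lambda = s \log \zeta - 2 (sm-r)\log(-1)$ (taking $\log(-1) = i\pi$), and then $2\pi i s(\alpha - r/s) = \Lambda$, hence $\left|\alpha - r/s\right| = |\Lambda|/(2\pi s)$.

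The second step is to apply \eqref{eq:BW} to $\Lambda = b_1 \log \alpha_1 + b_2 \log \alpha_2$ with $n = 2$, $\alpha_1 = \zeta = \e(\alpha)$, $\alpha_2 = -1$, $b_1 = s$, $b_2 = -2(sm-r)$. The number field is $K = \Q(\e(\alpha))$, so $d = [\Q(\e(\alpha)):\Q]$. One must check $\Lambda \ne 0$: this is exactly where the hypothesis that $\alpha$ is \emph{irrational} enters, since $\Lambda = 0$ would force $\alpha - r/s = 0$. The height parameters: for $\alpha_1$ one takes $A_1 = \max\{\h(\e(\alpha)), |\log \e(\alpha)|/d, 1/d\}$; since $|\log \e(\alpha)| = 2\pi\|\alpha\| \le 2\pi|\alpha|$ (or one can arrange $|\log\zeta| \le \pi$ by choosing the branch, but the statement uses $2\pi\alpha/d$), this matches the $A_1$ in the lemma. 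For $\alpha_2 = -1$ one has $\h(-1) = 0$ and $|\log(-1)| = \pi$, so $A_2 = \max\{0, \pi/d, 1/d\} = \pi/d$ (assuming $d$ not too large; in any case $A_2 \le \pi/d$ suffices for an upper bound on $C(n,d)A_1 A_2 \cdots$). Then $B = \max\{|b_1|, |b_2|\}$, and the key point is to bound $B$ in terms of $s$: since $\zeta^s = \e(s\alpha)$ has modulus $1$, the integer $m$ with $2\pi i s\alpha = s\log\zeta + 2\pi i m$ satisfies $|m| \le s|\alpha| + O(1)$, wait — more carefully, $|sm - r|$ can be bounded using that $|\Lambda|$ is small, or one simply notes $|2(sm-r)| \le $ something like $s|\alpha| \cdot 2 + |\Lambda|/\pi \ll s$ when $|\alpha|$ is bounded; since we want the final bound in the clean form $1/(\pi(2s)^{1+\kappa})$, the bookkeeping should give $B \le 2s$ or thereabouts, so that $\log B \le \log(2s)$.

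Putting these together: $\eqref{eq:BW}$ with $n=2$, $C(2,d) = 18 \cdot 3! \cdot 2^3 \cdot (32d)^4 \log(4d) = 18 \cdot 6 \cdot 8 \cdot (32)^4 d^4 \log(4d)$, gives $\log|\Lambda| > - C(2,d) A_1 (\pi/d) \log(2s)$ (for $s$ large enough that $\log(2s) \ge 1/d$), so $|\Lambda| > (2s)^{-\kappa(\alpha)}$ with $\kappa(\alpha) = C(2,d)A_1 \pi / d$; a numerical check that $18 \cdot 6 \cdot 8 \cdot 32^4 \cdot \pi = 2^{25}3^3\pi$ (up to the claimed constants — indeed $18 \cdot 6 \cdot 8 = 864 = 2^5 \cdot 27 = 2^5 3^3$ and $32^4 = 2^{20}$, giving $2^{25}3^3$) confirms the stated value $\kappa(\alpha) = 2^{25}3^3\pi d^3 A_1 \log(4d)$ after $C(2,d)A_1\pi/d = 2^{25}3^3 d^4 \log(4d) A_1 \pi/d = 2^{25}3^3\pi d^3 A_1 \log(4d)$. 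Finally $|\alpha - r/s| = |\Lambda|/(2\pi s) > (2s)^{-\kappa(\alpha)}/(2\pi s) \ge 1/(\pi (2s)^{1+\kappa(\alpha)})$. The main obstacle — really the only delicate point — is the careful bounding of the auxiliary integer coefficient $b_2 = -2(sm-r)$ so that $B \le 2s$ and the branch-of-logarithm bookkeeping is consistent with the definition of $\e(\alpha)$ and the principal value of $\Arg$; everything else is substitution into \eqref{eq:BW} and arithmetic simplification of the constants, together with handling the small-$s$ cases separately (where the claimed inequality is easy because the right-hand side is small).
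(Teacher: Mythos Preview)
Your approach is the same as the paper's --- reduce $|\alpha - r/s|$ to a linear form in the two logarithms $\log\e(\alpha)$ and $\log(-1)$, then apply the Baker--W\"ustholz bound~\eqref{eq:BW} --- and your constant bookkeeping is correct. The one place where your argument is incomplete is exactly the point you flag: bounding the coefficient $b_2 = -2(sm - r)$ so that $B \le 2s$. Your suggestions (``$|m| \le s|\alpha| + O(1)$'', ``$|2(sm-r)| \ll s$ when $|\alpha|$ is bounded'') do not give the clean bound $B \le 2s$ without further input, and a bound like $B \ll s(1+|\alpha|)$ would not recover the stated exponent $1+\kappa(\alpha)$.

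The paper resolves this with two simple preliminary reductions. First, replace $\alpha$ by its fractional part $\{\alpha\}$, so that one may assume $0 < \alpha < 1$; then $\log\e(\alpha) = 2\pi i\alpha$ on the nose (with the convention $0 \le \Arg < 2\pi$), so your integer $m$ vanishes and the linear form is simply $\Lambda = s\log\e(\alpha) - 2r\log(-1)$. Second, dispose of the case $|r| > s$ at the outset: if $|r| > s$ then $|\alpha - r/s| > 1/s$ already, which is stronger than the claimed bound, so one may assume $|r| \le s$. With these two reductions in hand, $B = \max\{s,\, 2|r|\} \le 2s$, and together with $d \ge 2$ (since $\e(\alpha) \notin \Q$) one gets $\max\{\log B,\, 1/d\} \le \log(2s)$, after which the computation you outline goes through verbatim.
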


\begin{proof} 
We can always replace $\alpha$ with its fractional part $\{\alpha\}$. Hence, 
without loss of generality, we assume that $0 < \alpha < 1$ and $\gcd(r,s)=1$.  
Then, if $|r| > s$, we have 
$$
\left|  \alpha - \frac{r}{s} \right| > \frac{1}{s},
$$
which is better than the desired result. 
In the sequel, we assume $|r| \le s$. 

Denote 
$$
 \Delta = \alpha - \frac{r}{s}. 
$$
Then, since $0 < \alpha < 1$, we have 
$$
\log \e(\alpha) = 2\pi i \alpha = 2\pi i (\Delta + \frac{r}{s}), 
$$
and so 
$$
2s\pi i \Delta = s\log \e(\alpha) - 2r\pi i = s\log \e(\alpha) - 2r\log (-1). 
$$
Denote 
$$
\Lambda = s\log \e(\alpha) - 2r\log (-1). 
$$
So, 
\begin{equation} \label{eq:Del}
|\Delta| = \frac{|\Lambda|}{2\pi s}. 
\end{equation}

Note that $\alpha$ is an irrational number. 
So, $\e(\alpha)$ is not a root of unity, and thus $\Lambda \ne 0$.   
Using~\eqref{eq:BW} with $n=2$, we obtain 
\begin{equation}   \label{eq:Lam}
\log |\Lambda| > -C(d)A_1A_2 \cdot \max\{\log B, 1/d\}, 
\end{equation}
where 
\begin{align*}
& C(d) = C(2,d) = 2^{25} 3^3d^4\log(4d), \\
& d = [ \Q(\e(\alpha)) : \Q], \\
& A_1 = \max\{\h(\e(\alpha)), 2\pi\alpha/d, 1/d\}, \quad A_2 = \pi/d, \\
& B = \max\{s, 2|r| \}. 
\end{align*}
Since $|r| \le s$, we have $B \le 2s$. 
In view of $s \ge 1$ and $d \ge 2$, we get 
$$
\max\{\log B, 1/d\} \le \log(2s).
$$
Hence, the inequality~\eqref{eq:Lam} becomes 
$$
\log |\Lambda| > - 2^{25} 3^3 \pi d^3\log(4d)  A_1 \log (2s),
$$
which, together with~\eqref{eq:Del}, implies the desired result. 
\end{proof}

\subsection{Dirichlet's theorem} 

We first recall {\it Dirichlet's theorem\/} in Diophantine approximation; see, for example~\cite[Equation~(20.29)]{IwKow}. 

\begin{lemma} \label{lem:Dirichlet}
Let $\alpha$ be an irrational number. 
Then, for any integer $N \ge 2$ there are two integers $r,s$ such that 
$$
0 < \left| \alpha - \frac{r}{s} \right| \le \frac{1}{sN}, \quad 1 \le s \le N, \ \gcd(r,s)=1. 
$$
\end{lemma}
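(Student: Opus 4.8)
The plan is a direct application of the pigeonhole principle, in the classical manner. Fix $N \ge 2$ and consider the $N+1$ real numbers $\{0\cdot\alpha\}, \{1\cdot\alpha\}, \dots, \{N\cdot\alpha\}$, where $\{x\}$ denotes the fractional part of $x$; these all lie in $[0,1)$. Splitting $[0,1)$ into the $N$ disjoint half-open subintervals $[(k-1)/N, k/N)$, $k = 1, \dots, N$, the pigeonhole principle forces two of these numbers into the same subinterval, say $\{j_1\alpha\}$ and $\{j_2\alpha\}$ with $0 \le j_2 < j_1 \le N$. Consequently $|\{j_1\alpha\} - \{j_2\alpha\}| < 1/N$.

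Next I would set $s = j_1 - j_2$ and $r = \fl{j_1\alpha} - \fl{j_2\alpha}$, so that $1 \le s \le N$ and $s\alpha - r = \{j_1\alpha\} - \{j_2\alpha\}$. Hence $|s\alpha - r| < 1/N$, and dividing by $s$ gives $|\alpha - r/s| < 1/(sN)$. Since $\alpha$ is irrational we have $s\alpha - r \ne 0$, so in fact $0 < |\alpha - r/s| < 1/(sN)$, which is even stronger than the (weak) inequality claimed in the lemma.

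Finally, to secure the coprimality $\gcd(r,s) = 1$, let $e = \gcd(r,s) \ge 1$ and replace the pair $(r,s)$ by $(r/e, s/e)$. This leaves the value $r/s$ unchanged, does not increase $s$ (so $1 \le s \le N$ persists), and the bound $1/(sN)$ only relaxes as $s$ decreases; hence all the required conclusions hold for the reduced pair. There is no genuine obstacle here: the only two points deserving a word of care are the strict positivity $0 < |\alpha - r/s|$, which is precisely where the irrationality of $\alpha$ is used, and the verification that passing to lowest terms does not destroy the upper bound.
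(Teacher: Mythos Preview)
Your argument is the standard pigeonhole proof of Dirichlet's approximation theorem and is entirely correct, including the two points you flagged (irrationality forces strict positivity, and reducing to lowest terms only weakens the constraint on $s$ and the upper bound). The paper does not actually prove this lemma at all: it merely quotes it with a reference to~\cite[Equation~(20.29)]{IwKow}. So there is nothing to compare beyond noting that you have supplied a self-contained proof where the paper is content to cite the literature.
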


In Lemma~\ref{lem:Dirichlet}, if $N$ tends to infinity, then $s$ also goes to infinity. 
It is natural to ask how large $s$ can be. 
If $\alpha$ is an irrational algebraic number, combining Lemma~\ref{lem:Dirichlet}  with the bound~\eqref{eq:Roth} we have 
\begin{equation}  \label{eq:scN}
s > \big( c(\alpha, \varepsilon)N \big)^{1/(1+\varepsilon)}. 
\end{equation}

The next result follows directly from Lemmas~\ref{lem:arg-appro} and~\ref{lem:Dirichlet}. 

\begin{lemma}  \label{lem:appro-s}
Let $\alpha$ be an irrational number. 
Assume that $\e(\alpha)$ is an algebraic number. 
Then,  for any  integer $N \ge 2$, there are integers $r,s$ such that 
$$
0 < \left| \alpha - \frac{r}{s} \right| \le \frac{1}{sN}, \quad 1 \le s \le N, \ \gcd(r,s)=1, 
$$
and 
$$  
s > \frac{1}{2}\big( N/(2\pi) \big)^{1/\kappa(\alpha)},
$$
where $\kappa(\alpha)$ has been defined in Lemma~\ref{lem:arg-appro}.
\end{lemma}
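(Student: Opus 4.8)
The plan is to feed the rational approximants furnished by Dirichlet's theorem directly into the effective Diophantine lower bound of Lemma~\ref{lem:arg-appro}. First I would apply Lemma~\ref{lem:Dirichlet} to the given integer $N \ge 2$, obtaining integers $r,s$ with
\[
0 < \left| \alpha - \frac{r}{s} \right| \le \frac{1}{sN}, \qquad 1 \le s \le N, \quad \gcd(r,s) = 1.
\]
This already yields the first displayed assertion of the lemma verbatim, so it only remains to establish the lower bound on $s$.

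Since $\alpha$ is irrational and $\e(\alpha)$ is algebraic, Lemma~\ref{lem:arg-appro} applies to this very pair $r,s$ (it holds for all integers $r,s$ with $s \ge 1$), giving
\[
\left| \alpha - \frac{r}{s} \right| > \frac{1}{\pi (2s)^{1 + \kappa(\alpha)}}.
\]
Combining this with the upper bound $|\alpha - r/s| \le 1/(sN)$ from Lemma~\ref{lem:Dirichlet} we obtain $1/\big(\pi (2s)^{1+\kappa(\alpha)}\big) < 1/(sN)$, that is,
\[
N < \pi \cdot 2^{1 + \kappa(\alpha)}\, s^{\kappa(\alpha)}.
\]
Writing $\pi \cdot 2^{1+\kappa(\alpha)} = 2\pi \cdot 2^{\kappa(\alpha)}$ and taking $\kappa(\alpha)$-th roots then yields $s > \frac{1}{2}\big(N/(2\pi)\big)^{1/\kappa(\alpha)}$, which is the asserted bound.

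I do not expect any genuine obstacle here: both ingredients are already in hand, and the deduction is a one-line manipulation of inequalities. The only point meriting a word of care is that the approximants extracted from Lemma~\ref{lem:Dirichlet} must be exactly the ones inserted into Lemma~\ref{lem:arg-appro}; since the latter is valid for every admissible pair $r,s$, this matching is automatic. It is also worth observing that the resulting estimate is the fully effective counterpart of~\eqref{eq:scN}: Roth's ineffective constant $c(\alpha,\varepsilon)$ and exponent $1/(1+\varepsilon)$ are here replaced by an explicit constant and the explicit exponent $1/\kappa(\alpha)$, at the cost of a much larger (but effective) value of the exponent.
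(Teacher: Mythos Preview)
Your proposal is correct and matches the paper's approach exactly: the paper simply states that the result follows directly from Lemmas~\ref{lem:arg-appro} and~\ref{lem:Dirichlet}, and your argument spells out precisely that combination. The inequality manipulation you give is the intended one, and there are no gaps.
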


\subsection{Exponential sums with  M{\"o}bius function}

Recall the following bound of exponential sums with M{\"o}bius function, 
which depends on the Diophantine 
properties of the exponent $\alpha$; see~\cite[Theorem~13.9]{IwKow}. 
It can be viewed as a variant of the modern form of the Vinogradov bound
for exponential sums over primes;  see~\cite[Section~13.5]{IwKow} for more details. 

\begin{lemma}
\label{lem:MobExp2} 
Suppose that the real $\alpha$ satisfies
$$
\left|\alpha - \frac{r}{s}\right| \le \frac{1}{s^2} 
$$
for some integers $r, s$ with $s>0$ and $\gcd(r,s) = 1$.
Then, for any integer $N \ge 2$, we have 
$$
\sum_{n=1}^N\mu(n)  \e(n\alpha)
\ll \left(s^{1/4} N^{1/4} + s^{-1/4} N^{1/2} + N^{2/5}\right) N^{1/2}(\log N)^4.
$$
\end{lemma}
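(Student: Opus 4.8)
The plan is to derive this estimate from \emph{Vaughan's identity}, which is the standard route to Vinogradov-type bounds for linear exponential sums twisted by the M{\"o}bius function; this is essentially how~\cite[Theorem~13.9]{IwKow} is proved, so I only outline the steps. Fix parameters $U,V\ge 2$ with $UV\le N$, to be chosen at the end. Vaughan's identity, in the form adapted to $\mu$, expresses $\mu(n)\mathbf{1}_{n>U}$ as a sum of boundedly many bilinear forms, each of one of two shapes: a \textbf{Type~I} form $\sum_{d\le UV}a_d\sum_{m\le N/d}\e(dm\alpha)$ with $|a_d|\ll\tau(d)$, or a \textbf{Type~II} form $\sum_{m}b_m\sum_{k}c_k\,\e(mk\alpha)$ with $|b_m|,|c_k|\ll\tau(\cdot)$ in which, after a dyadic subdivision into $O(\log N)$ pieces, the variables run over $m\asymp M$, $k\asymp K$ with $MK\asymp N$ and $M,K$ constrained in terms of $U,V$. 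Together with the trivial contribution $O(U)$ from $n\le U$, the task reduces to estimating these two types. Both rest on the classical lemma (see, e.g.,~\cite[Section~13.5]{IwKow}): if $|\alpha-r/s|\le s^{-2}$ with $\gcd(r,s)=1$, then for all $X,D\ge 1$
$$
\sum_{d\le D}\min\!\Bigl(\frac{X}{d},\,\frac1{\|d\alpha\|}\Bigr)\ \ll\ \Bigl(\frac{X}{s}+D+s\Bigr)\log(2sX),
$$
where $\|\cdot\|$ is the distance to the nearest integer; it is proved by splitting the $d$-range into blocks of length $s$.

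For the \textbf{Type~I} sums the inner sum over $m$ is a geometric progression, so $\bigl|\sum_{m\le N/d}\e(dm\alpha)\bigr|\ll\min(N/d,\|d\alpha\|^{-1})$, and summing over $d\le UV$ via the lemma (with $X=N$, $D=UV$) bounds their contribution by $\ll(N/s+UV+s)(\log N)^{2}$. For the \textbf{Type~II} sums I apply Cauchy--Schwarz in the variable $m$, estimate the diagonal by $\sum_{m\asymp M}\tau(m)^{2}\ll M(\log N)^{3}$, expand the square, and parametrise the off-diagonal by the difference $\ell=k_1-k_2$, so that the inner sum over $m$ is once more $\ll\min(M,\|\ell\alpha\|^{-1})$; summing over $0<|\ell|<K$ with the same lemma and taking square roots gives a bound of the shape $N^{1/2}\bigl((N/s)^{1/2}+M^{1/2}+K^{1/2}+s^{1/2}\bigr)(\log N)^{4}$, in which $M^{1/2}+K^{1/2}$ is controlled by the ranges of $M,K$ forced by Vaughan's identity, hence by $U$ and $V$.

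It then remains to choose $U$ and $V$ — as suitable powers of $N$ — so as to balance the $UV$ term from Type~I, the $U,V$-dependent part of the Type~II bound, and the trivial term $O(U)$; a routine optimisation of the exponents (in which one may assume $1\le s\le N$, since for $s>N$ the claimed bound already exceeds the trivial estimate $N$) then collapses the three surviving contributions into
$$
\sum_{n=1}^N\mu(n)\,\e(n\alpha)\ \ll\ \bigl(s^{1/4}N^{1/4}+s^{-1/4}N^{1/2}+N^{2/5}\bigr)N^{1/2}(\log N)^{4},
$$
as claimed. The main obstacle is the Type~II estimate: the Cauchy--Schwarz step and the passage to the difference variable $\ell$ must be arranged so that no spurious power of $s$ is introduced, and the logarithmic factors produced by the divisor-function second moment and by the min-sum lemma must be tracked carefully to land on exactly $(\log N)^{4}$. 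Once the two bilinear bounds are in hand, the optimisation over $U,V$ is routine.
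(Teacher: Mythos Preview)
Your outline is the standard Vaughan-identity derivation and is essentially how the cited reference~\cite[Theorem~13.9]{IwKow} proves this bound; the paper itself does not give a proof of Lemma~\ref{lem:MobExp2} but simply quotes it from Iwaniec--Kowalski. So there is nothing to compare: your sketch reproduces the argument behind the result the paper imports.
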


We remark that the implied constant in Lemma~\ref{lem:MobExp2} is effectively computable, 
but getting an explicit value of this constant is beyond the scope of this paper. 

In addition, Davenport~\cite{Davenport} (see also~~\cite[Theorem~13.10]{IwKow}) has established the following general result: 
for any real number $\alpha$ and $N \ge 2$, we have 
\begin{equation}  \label{eq:Davenport}
\left|\sum_{n=1}^N\mu(n)  \e(n\alpha)\right| \le c(B) N(\log N)^{-B}
\end{equation}
for any $B > 0$, where $c(B) > 0$ is a constant depending only on $B$. 
We remark that the constant $c(B)$ is currently not effectively computable. 
The upper bound in~\eqref{eq:Davenport} has a very attractive feature that it is independent of $\alpha$. 

Involving the dependence on $\alpha$, the bound~\eqref{eq:Davenport} can be improved for some special cases. 

\begin{lemma}  \label{lem:mu-alpha}
Let $\alpha$ be an irrational number. 
Assume that $\e(\alpha)$ is an algebraic number. 
Then,  for any  integer $N \ge 2$, we have 
$$  
\sum_{n=1}^N\mu(n)  \e(n\alpha)
\ll N^{1-1/(4\kappa(\alpha) +4)} (\log N)^4, 
$$
where $\kappa(\alpha)$ has been defined in Lemma~\ref{lem:arg-appro}.
\end{lemma}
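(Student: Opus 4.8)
The plan is to combine the refined Dirichlet-type approximation of Lemma~\ref{lem:appro-s} with the exponential sum bound of Lemma~\ref{lem:MobExp2}, optimising over an auxiliary length parameter $M$ used in place of $N$ in the Dirichlet step. Since $\alpha$ is irrational and $\e(\alpha)$ is algebraic, Lemma~\ref{lem:appro-s} may be applied with any integer $M \ge 2$ (not only with $N$), producing integers $r,s$ with $\gcd(r,s)=1$, $1 \le s \le M$, $0 < |\alpha - r/s| \le 1/(sM)$, and, crucially, $s > \tfrac12 \bigl(M/(2\pi)\bigr)^{1/\kappa(\alpha)}$, where $\kappa(\alpha)$ is the quantity introduced in Lemma~\ref{lem:arg-appro}. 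Because $s \le M$ one has $|\alpha - r/s| \le 1/(sM) \le 1/s^2$, so this pair $r,s$ is admissible in Lemma~\ref{lem:MobExp2}; at the same time $s^{1/4} \le M^{1/4}$ and $s^{-1/4} \ll M^{-1/(4\kappa(\alpha))}$, so both extremes of the denominator $s$ are under control.

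Writing $\kappa = \kappa(\alpha)$, I would take $M = \lceil N^{\kappa/(\kappa+1)} \rceil$. As $\kappa$ is enormous (by the formula in Lemma~\ref{lem:arg-appro} one has $\kappa \ge 2^{25}3^3\pi\, d^3 A_1 \log(4d)$ with $d \ge 2$ and $A_1 \ge 1/d$), one checks that $2 \le M \le N$ for every integer $N \ge 2$, and the $O(1)$ loss incurred by rounding $N^{\kappa/(\kappa+1)}$ up to an integer will be absorbed into the implied constant. Feeding the $r,s$ from Lemma~\ref{lem:appro-s} into Lemma~\ref{lem:MobExp2} and using $s^{1/4} \le M^{1/4}$, $s^{-1/4} \ll M^{-1/(4\kappa)}$ then gives
$$
\sum_{n=1}^N\mu(n)\e(n\alpha) \ll \bigl( M^{1/4} N^{3/4} + M^{-1/(4\kappa)} N + N^{9/10} \bigr)(\log N)^4 .
$$

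Substituting $M \asymp N^{\kappa/(\kappa+1)}$ and carrying out the elementary exponent arithmetic, the first two terms become
$$
M^{1/4} N^{3/4} \asymp N^{(4\kappa+3)/(4\kappa+4)} = N^{1-1/(4\kappa+4)}
\qquad\text{and}\qquad
M^{-1/(4\kappa)} N \asymp N^{1-1/(4\kappa+4)} ,
$$
so they are exactly balanced — this is the reason for the choice of the exponent $\kappa/(\kappa+1)$ in $M$. Since $\kappa \ge 3/2$ (in fact vastly larger), we have $9/10 \le 1 - 1/(4\kappa+4)$, so the term $N^{9/10}$ is dominated, and collecting the contributions yields the claimed bound $N^{1-1/(4\kappa(\alpha)+4)}(\log N)^4$.

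The only delicate points are bookkeeping ones: verifying that $M$ really is an integer lying in $[2,N]$ for all $N \ge 2$ (a consequence of the huge size of $\kappa$), handling the rounding of $N^{\kappa/(\kappa+1)}$, and checking that every estimate above is uniform in $\alpha$ once expressed through $\kappa(\alpha)$, so that the implied constant stays absolute. I do not expect a genuine obstacle here; the one conceptual move is to invoke the Dirichlet-type Lemma~\ref{lem:appro-s} with the auxiliary length $M$ rather than with $N$ itself, which decouples the size of the denominator $s$ from the length $N$ of the sum and makes the optimisation possible.
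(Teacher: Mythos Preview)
Your proposal is correct and follows essentially the same approach as the paper's own proof: apply Lemma~\ref{lem:appro-s} with an auxiliary length $M$, feed the resulting $r,s$ into Lemma~\ref{lem:MobExp2}, bound $s^{1/4}\le M^{1/4}$ and $s^{-1/4}\ll M^{-1/(4\kappa)}$, and optimise by choosing $M=\lceil N^{\kappa/(\kappa+1)}\rceil$ to balance the two main terms. Your write-up is in fact slightly more explicit than the paper's in checking the hypothesis $|\alpha-r/s|\le 1/s^2$ of Lemma~\ref{lem:MobExp2} and in verifying that the $N^{9/10}$ term is dominated.
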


\begin{proof}
By Lemma~\ref{lem:appro-s}, for any integer $M \ge 2$, there are integers $r,s$ such that 
$$
0 < \left| \alpha - \frac{r}{s} \right| \le \frac{1}{s M}, \quad 1 \le s \le M , \  \gcd(r,s)=1, 
$$
and 
$$
s > \frac{1}{2} \big( M /(2\pi) \big)^{1/\kappa(\alpha)},
$$ 
where $\kappa(\alpha)$ has been defined in Lemma~\ref{lem:arg-appro}. 
Then, by Lemma~\ref{lem:MobExp2}, we have 
\begin{equation} \label{eq:muN}
\sum_{n=1}^N\mu(n)  \e(n\alpha)
\ll \left(s^{1/4} N^{1/4} + s^{-1/4} N^{1/2} + N^{2/5}\right) N^{1/2}(\log N)^4.
\end{equation} 
Note that 
$$
\frac{1}{2}  \big( M/(2\pi) \big)^{1/\kappa(\alpha)} < s \le   M, 
$$
hence 
$$
s^{1/4} N^{1/4} + s^{-1/4} N^{1/2} \ll M^{1/4} N^{1/4} +  M^{- 1/(4\kappa(\alpha))} N^{1/2}, 
$$
which with 
$$
M = \rf{N^{\kappa(\alpha) / (\kappa(\alpha) + 1)}} \ge 2
$$
becomes 
$$
s^{1/4} N^{1/4} + s^{-1/4} N^{1/2} \ll  N^{1/2 - 1/(4\kappa(\alpha) +4)}.
$$  
Substituting this into~\eqref{eq:muN},  we see that the term  $N^{9/10}$ never dominates, and 
we obtain  the desired result.  
\end{proof}

From the above proof, one can see that if an irrational number $\alpha$ has a Diophantine property 
as in Lemma~\ref{lem:appro-s}, then the upper bound~\eqref{eq:Davenport} can be improved similarly. 
For example, this can be done for irrational algebraic numbers by~\eqref{eq:scN}.

\subsection{Frobenius eigenvalues and angles}

We refer to~\cite{Lor} for a background on curves and their zeta-functions.

For  a smooth projective curve $\cC$  over the finite field $\F_q$, we define the 
zeta-function of $\cC$ as 
$$
Z(T)=\exp\left(\sum_{n=1}^{\infty}\#\cC(\F_{q^n})\frac{T^n}{n}\right). 
$$
It is well-known that if $\cC$ is of genus $g \ge 1$ then 
$$
Z(T)=\frac{P(T)}{(1-T)(1-qT)},
$$
where 
$$
P(T)=\prod_{j=1}^{2g}(1-\beta_j T) 
$$
 is a polynomial of degree $2g$ with integer coefficients, and 
  $\beta_1, \beta_2, \ldots, \beta_{2g}$ are algebraic integers, 
called the {\it Frobenius  eigenvalues\/}, which  satisfy  
\begin{equation}
\label{eq:Frob}
|\beta_j|  = q^{1/2},  \qquad j =1, 2, \ldots, 2g;
\end{equation}
see~\cite[Section~VIII.5]{Lor}. 
Then,  for each $\beta_j$, since all its conjugates have absolute value $q^{1/2}$, 
we have (via the Mahler measure, see \cite[Lemma 3.10]{Waldschmidt}) 
\begin{equation}  \label{eq:height}
\h(\beta_j) = \frac{1}{2} \log q,  \qquad j =1, 2, \ldots, 2g.
\end{equation}

Furthermore, in view of~\eqref{eq:Frob} we write 
\begin{equation}
\label{eq:beta} 
\beta_j =  q^{1/2} \e(\alpha_j), 
\end{equation} 
with some $\alpha_j \in [0,1)$, $j =1, 2,\ldots, 2g$. 
Usually, these $2\pi \alpha_j$ are called \textit{Frobenius angles}. 
We then call $\alpha_j$ \textit{normalised Frobenius angles}. 
Now simple combinatorial arguments lead to the  well-known identity
$$
  \# \cC(\F_{q^n}) = q^n + 1 - \sum _{j=1}^{2g}\beta_j^n, 
$$
which implies
\begin{equation}
\label{eq:an-alpha}
a_\cC (n) = \frac{1}{2g}  \sum _{j=1}^{2g}\e(n\alpha_j).
\end{equation}
This is crucial for our approach.

\subsection{Diophantine properties of normalised Frobenius angles}

We recall the following irrationality property of normalised Frobenius angles 
given by~\cite[Lemma~8]{AhShp}

 \begin{lemma}
\label{lem:Irrat}
Suppose that $\cC$ is an ordinary smooth projective curve  of genus $g \ge 1$ 
over $\F_q$. Then all its normalised Frobenius angles  $\alpha_j$, $ j =1, 2, \ldots, 2g$, are irrational. 
\end{lemma}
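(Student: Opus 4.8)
The plan is to prove the contrapositive statement at the level of a single eigenvalue: if some normalised Frobenius angle $\alpha_j$ is \emph{rational}, then the associated Frobenius eigenvalue $\beta_j$ must have a forbidden $p$-adic valuation, where $p$ is the characteristic of $\F_q$. So I would argue by contradiction, and the argument splits into two elementary steps followed by one step that invokes the arithmetic meaning of "ordinary".

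First, suppose $\alpha_j = a/b$ is rational, written in lowest terms with $b \ge 1$. Then $\e(\alpha_j)$ is a $b$-th root of unity, so raising the relation $\beta_j = q^{1/2}\e(\alpha_j)$ from~\eqref{eq:beta} to the $b$-th power gives $\beta_j^{\,b} = q^{b/2}\e(\alpha_j)^{b} = q^{b/2}$, and hence $\beta_j^{\,2b} = q^{b}$. Next, fix any extension $v$ of the $p$-adic valuation of $\Q$ to $\overline{\Q}$, normalised so that $v(q) = 1$. Applying $v$ to the identity $\beta_j^{\,2b} = q^{b}$ yields $2b\,v(\beta_j) = b$, that is, $v(\beta_j) = 1/2$. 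Note that at this point I have used only that $\alpha_j$ is rational, not that $\alpha_j$ is algebraic or transcendental.

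The contradiction now comes from ordinariness. By hypothesis $\cC$ is ordinary, which by definition means its Jacobian $J_\cC$ is an ordinary abelian variety of dimension $g$, i.e.\ it has maximal $p$-rank $g$; see~\cite[Definition~3.1]{Howe}. The numbers $\beta_1,\dots,\beta_{2g}$ are exactly the Frobenius eigenvalues of $J_\cC$ (the reciprocals of the roots of the zeta-function numerator $P(T)$). For an ordinary abelian variety the Newton polygon of Frobenius has slopes only $0$ and $1$ — each with multiplicity $g$, using also the functional equation $\{q/\beta_j\} = \{\beta_j\}$ — so $v(\beta_j) \in \{0,1\}$ for every $j$. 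Since $1/2 \notin \{0,1\}$, this contradicts $v(\beta_j) = 1/2$, and therefore every $\alpha_j$ is irrational.

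I expect the only step with real content to be the last one: identifying the $p$-adic valuations of the $\beta_j$ with the slopes of the Newton polygon of $J_\cC$ and quoting that an ordinary abelian variety has slopes only $0$ and $1$ (equivalently, that exactly $g$ of the $\beta_j$ are $p$-adic units, equivalently that the middle coefficient of $P(T)$ is a $p$-adic unit). This rests on the Dieudonn\'e--Manin classification / the structure theory of ordinary abelian varieties rather than on anything elementary, and it is precisely the place where the ordinariness hypothesis is indispensable — indeed for a supersingular curve one has $v(\beta_j) = 1/2$ for all $j$, so the statement genuinely fails without it. The first two steps, by contrast, are routine manipulations with roots of unity and valuations.
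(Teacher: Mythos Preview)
Your argument is correct. The two elementary steps are fine, and the key third step---that for an ordinary abelian variety the $p$-adic valuations of the Frobenius eigenvalues (normalised so that $v(q)=1$) lie in $\{0,1\}$---is exactly one of the standard equivalent formulations of ordinariness (Newton polygon with slopes $0$ and $1$, or equivalently the middle coefficient of $P(T)$ being a $p$-adic unit), as recorded in the reference you cite.

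As for comparison: the paper does not actually prove this lemma. It simply quotes the statement from~\cite[Lemma~8]{AhShp} and moves on. So your write-up supplies a self-contained proof where the paper gives only a citation. The argument you give---rational angle forces $\beta_j^{2b}=q^b$, hence $v(\beta_j)=1/2$, contradicting the $\{0,1\}$ slope set---is the natural and essentially the only route to this fact, and is presumably what the cited reference does as well. Your closing remark that supersingular curves have $v(\beta_j)=1/2$ throughout, showing the hypothesis is sharp, is a nice touch that the paper does not mention.
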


Now, we can use Lemma~\ref{lem:arg-appro} to obtain a Diophantine property for the normalised Frobenius angles. 

\begin{lemma}
\label{lem:Frob-ang}
Suppose that $\cC$ is an ordinary smooth projective curve  of genus $g \ge 1$ 
over $\F_q$. Let $\alpha$ be an arbitrary normalised Frobenius angle of $\cC$. 
Then,  for any integers $r,s$ with $s \ge 1$, we have 
$$  
\left|  \alpha - \frac{r}{s} \right| > \frac{1}{\pi (2s)^{1+\kappa(q,g)}}, 
$$
where 
$$
 \kappa(q,g) = 2^{31}3^3 \pi g^3 (\pi + \log q) \log(16g) . 
$$
\end{lemma}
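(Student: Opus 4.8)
The plan is to apply Lemma~\ref{lem:arg-appro} to $\alpha$, a normalised Frobenius angle of $\cC$, and then simply bound the parameters $d$ and $A_1$ appearing there in terms of $q$ and $g$. First I would note that $\alpha$ is irrational by Lemma~\ref{lem:Irrat} (this is where the ordinariness hypothesis is used), and that $\e(\alpha) = \beta_j/q^{1/2}$ is an algebraic number since $\beta_j$ is an algebraic integer by~\eqref{eq:Frob}; thus the hypotheses of Lemma~\ref{lem:arg-appro} are met and we obtain $\left|\alpha - r/s\right| > 1/\big(\pi (2s)^{1+\kappa(\alpha)}\big)$ with $\kappa(\alpha) = 2^{25}3^3 \pi d^3 A_1 \log(4d)$, $d = [\Q(\e(\alpha)):\Q]$, and $A_1 = \max\{\h(\e(\alpha)),\, 2\pi\alpha/d,\, 1/d\}$. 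It then suffices to show $\kappa(\alpha) \le \kappa(q,g)$, since the exponent $1+\kappa(\alpha)$ on $2s$ only makes the bound larger when $\kappa(\alpha)$ is replaced by a larger value (as $2s \ge 1$).

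Next I would bound the degree $d$. Since $\e(\alpha) = \beta_j/q^{1/2}$, we have $\Q(\e(\alpha)) \subseteq \Q(\beta_j, q^{1/2})$, and $\beta_j$ is a root of $P(T)$, a polynomial of degree $2g$; hence $[\Q(\beta_j):\Q] \le 2g$ and $d \le 4g$. (If one prefers to work with $\beta_j$ directly rather than $\e(\alpha)$, one can instead use that $\beta_j$ and $q^{1/2}$ together generate a field of degree at most $4g$; either way $d \le 4g$, so $\log(4d) \le \log(16g)$.) For $A_1$: by~\eqref{eq:height} and the fact that heights scale, $\h(\e(\alpha)) = \h(\beta_j q^{-1/2}) \le \h(\beta_j) + \h(q^{-1/2}) = \tfrac12\log q + \tfrac12\log q = \log q$ (using $\h(q^{-1/2}) = \h(q^{1/2}) = \tfrac12 \log q$); the term $2\pi\alpha/d \le 2\pi$ since $\alpha \in [0,1)$; and $1/d \le 1$. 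So $A_1 \le \max\{\log q,\, 2\pi,\, 1\} \le \pi + \log q$ — a crude but adequate bound. Substituting $d \le 4g$, $A_1 \le \pi + \log q$, and $\log(4d) \le \log(16g)$ into the expression for $\kappa(\alpha)$ gives
$$
\kappa(\alpha) \le 2^{25} 3^3 \pi (4g)^3 (\pi + \log q) \log(16g) = 2^{25} 3^3 \pi \cdot 64 \cdot g^3 (\pi + \log q) \log(16g) = 2^{31} 3^3 \pi g^3 (\pi + \log q)\log(16g) = \kappa(q,g),
$$
which is exactly the claimed constant. Combining this with the inequality from Lemma~\ref{lem:arg-appro} and monotonicity in the exponent yields the statement.

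I do not anticipate any serious obstacle here: the argument is a direct specialization of Lemma~\ref{lem:arg-appro}, and every estimate is an elementary bound on algebraic degree or Weil height. The only point requiring a little care is the height computation for $\e(\alpha) = \beta_j q^{-1/2}$: one should use the standard subadditivity $\h(xy) \le \h(x) + \h(y)$ together with $\h(q^{-1/2}) = \h(q^{1/2})$ (inverses preserve height) and the value $\h(q^{1/2}) = \tfrac12 \log q$ (either by the Mahler-measure computation as in~\eqref{eq:height}, treating $q^{1/2}$ as a root of $T^2 - q$, or directly). One must also remember that $2\pi\alpha/d$ really is bounded by $2\pi$ and not merely by $2\pi/d$ after we drop the $d$ in the denominator — but since we are only seeking an upper bound on $\kappa(\alpha)$, using $2\pi\alpha/d \le 2\pi$ is harmless. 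Finally, one checks the constants multiply out to $2^{25}\cdot 64 = 2^{31}$, matching $\kappa(q,g)$.
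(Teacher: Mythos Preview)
Your approach is exactly the paper's: apply Lemma~\ref{lem:arg-appro} and bound $d$ and $A_1$ in terms of $q$ and $g$. There is, however, a small arithmetic slip in your bound on $A_1$. You write $2\pi\alpha/d \le 2\pi$ and then claim $A_1 \le \max\{\log q,\, 2\pi,\, 1\} \le \pi + \log q$, but the last inequality fails for small $q$ (for $q=2$ one has $2\pi \approx 6.28 > \pi + \log 2 \approx 3.83$). The fix, which is what the paper does, is to observe that $d \ge 2$: since $\alpha$ is irrational, $\e(\alpha)$ is a non-real complex number, so $\Q(\e(\alpha))$ cannot be $\Q$. Hence $2\pi\alpha/d < 2\pi/d \le \pi$, giving $A_1 \le \max\{\log q,\, \pi,\, 1/2\} \le \pi + \log q$ as required. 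Your remark at the end that ``using $2\pi\alpha/d \le 2\pi$ is harmless'' is therefore not quite right; the sharper bound $\pi$ is needed to land on the stated constant $\kappa(q,g)$. With this correction the proof goes through and matches the paper's argument.
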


\begin{proof}
Let $\beta$ be the Frobenius eigenvalue corresponding to $\alpha$ as defined in~\eqref{eq:beta}. 
That is, $\beta = q^{1/2}\e(\alpha)$. 
Hence, $\e(\alpha)$ is an algebraic number.  
Besides, by Lemma~\ref{lem:Irrat}, $\alpha$ is an irrational number. 
Then, appying Lemma~\ref{lem:arg-appro}, we have that 
for any integers $r,s$ with $s \ge 1$
$$  
\left|  \alpha - \frac{r}{s} \right| > \frac{1}{\pi (2s)^{1+\kappa(\alpha)}}, 
$$
where 
\begin{align*}
& \kappa(\alpha) = 2^{25} 3^3 \pi d^3\log(4d)  A_1, \\
& d = [\Q(\e(\alpha)) : \Q], \\
& A_1 = \max\{\h(\e(\alpha)), 2\pi \alpha/d, 1/d\}.\\
\end{align*}
Since $\deg \beta \le 2g$, we have 
$$
d = [\Q(\e(\alpha)) : \Q] = [\Q(\beta q^{-1/2}) : \Q] \le 2\deg \beta \le 4g. 
$$
Using~\eqref{eq:height}, we obtain 
$$
\h(\e(\alpha)) = \h(\beta q^{-1/2}) \le \h(\beta) + \h(q^{1/2}) = \log q. 
$$
Note that we must have $d \ge 2$. 
So, we have 
$$
A_1  \le \pi + \log q. 
$$
Hence, we get 
$$
\kappa(\alpha) \le 2^{31}3^3 \pi g^3 (\pi + \log q) \log(16g). 
$$
This completes the proof. 
\end{proof}

\section{Proofs of the main results}

\subsection{Proof of Theorem~\ref{thm:MobExp0}}

By~\eqref{eq:an-alpha}, we have 
$$
\left|\sum_{n=1}^N\mu(n)  a_\cC (n)\right| 
 \le  \frac{1}{2g} \sum _{j=1}^{2g} \left|\sum_{n=1}^N\mu(n)  \e(n\alpha_j)\right|. 
$$
Then, the desired result follows directly from the bound~\eqref{eq:Davenport}.

\subsection{Proof of Theorem~\ref{thm:MobExp1}}

Let $\alpha_1, \alpha_2, \ldots, \alpha_{2g}$ be the normalised Frobenius angles of $\cC$. 
For each $\alpha_j$, using Lemmas~\ref{lem:mu-alpha} and~\ref{lem:Frob-ang}, we have that 
for any  integer $N \ge 2$, 
\begin{equation}  \label{eq:mu-alpha}
\sum_{n=1}^N\mu(n)  \e(n\alpha_j)
\ll N^{1-1/(4\kappa(q,g) +4)} (\log N)^4, 
\end{equation}
where $\kappa(q,g)$ has been defined in Lemma~\ref{lem:Frob-ang}.

So, combining~\eqref{eq:an-alpha} with~\eqref{eq:mu-alpha}, 
for any integer $N \ge 2$, we have 
\begin{align*}
\left|\sum_{n=1}^N\mu(n)  a_\cC (n)\right| 
& = \frac{1}{2g}\left|\sum_{n=1}^N\mu(n)  \sum _{j=1}^{2g}\e(n\alpha_j)\right| \\
& \le \frac{1}{2g}\sum _{j=1}^{2g} \left|\sum_{n=1}^N\mu(n)  \e(n\alpha_j)\right| \\
& \ll N^{1-1/(4\kappa(q,g)+4)} (\log N)^4.  
\end{align*}
This completes the proof.

\section{Comments}
We remark that our methods also apply to  the sequences  of  character sums 
$$
{\mathcal S}_R(n)  =  \frac{1}{q^{n/2}} \sum_{x \in \F_{q^n}^*} \psi\left( \Tr_{\F_{q^n}/\F_q}(R(x)) \right),
$$ 
where  $\Tr_{\F_{q^n}/\F_q}$ is the trace function from $\F_{q^n}$ to $\F_q$, $\psi$ is a fixed additive character of $\F_q$ 
and $R(X) \in \F_q(X)$. 
We recall that if  $R(X)$ is not of the form $R(X) = Q(X)^p - Q(X)$ for any
other function $Q(X) \in \F_q(X)$, where $p$ is the characteristic of $\F_q$,
 then  ${\mathcal S}_R(n)$ can be expressed as a  power sum of the same type as~\eqref{eq:an-alpha};  
see~\cite[Section~11.11]{IwKow}. Hence a full analogue of   Theorem~\ref{thm:MobExp0}  holds for these sums for any $R(X) \in \F_q(X)$, such that  
$R(X)  \ne Q(X)^p - Q(X)$ for any $Q(X) \in \F_q(X)$.

The situation with an   analogue of Theorem~\ref{thm:MobExp1} is more complicated as one 
need an irrationality of angles statement 
similar to  Lemma~\ref{lem:Irrat}, which does not seem to be readily available.
There are some results in this direction in~\cite{BoKa,P-G}, in particular showing 
that the required irrationality holds ``generically'' in some special cases , but they  are not enough
to make a conclusive statement about  the sums  ${\mathcal S}_R(n)$. Hence, we pose this as an question to find appropriate 
sufficient conditions on $R(X)$ for which an analogues of  Theorem~\ref{thm:MobExp1} 
holds for   ${\mathcal S}_R(n)$. 

 Perhaps, it is natural to start with  Kloosterman sums 
$$
\cK(n) = \frac{1}{q^{n/2}} \sum_{x \in \F_{q^n}^*} \psi\(\Tr_{\F_{q^n}/\F_q}\(ax + x^{-1}\)\), \qquad n =1,2, \ldots, 
$$ 
where   $a\in \F_q^*$ is a fixed element. 
Indeed, it is well known that for some $\vartheta \in \C$ with $|\vartheta| =q^{1/2}$  we have 
$$
\cK(n) =  \vartheta^n + \overline \vartheta^n, \qquad n =1,2, \ldots; 
$$ 
see~\cite{BoKa,P-G}, where this representation is also exploited. 
We note that this setting is dual to that of~\cite{FKM,KorShp} where Kloosterman  
sums modulo a large fixed prime $p$ (and more general trace functions)  are ordered by the coefficient $n$ in the  exponent $nx + x^{-1}$, rather than by the degree of the field extension as in this setting.

\section*{Acknowledgement}

The authors are very grateful to the referee for valuable comments.   

During the preparation of this paper, the first author was supported by 
the Australian Research Council Grant~DE190100888, and  the second author was partially supported
by the Australian Research Council Grant~DP180100201.

\end{document}